\newtheorem{thm}{Theorem}
\newtheorem{prop}[thm]{Proposition}
\newtheorem{definition}{Definition}
\newtheorem{cor}[thm]{Corollary}
\newtheorem{remark}{Remark}
\newenvironment{proof}{\noindent {\em Proof.}}{\hspace*{\fill} $\Box $\newline}
\newcommand{\wt}{\mbox{\rm wt}}
\def\ds{\displaystyle}
\title{ Singly-even self-dual codes with minimal shadow}%\footnote{Partially supported by the
\author{Stefka Bouyuklieva\footnote{Supported by the
Humboldt Foundation. On leave from Faculty of Mathematics and Informatics, Veliko Tarnovo University, 5000 Veliko Tarnovo, Bulgaria}
\hspace{2mm} and
Wolfgang Willems\\[1ex]
Faculty of Mathematics,
University of Magdeburg \\ 39016 Magdeburg, Germany\\
% (e-mail: wolfgang.willems@mathematik.uni-magdeburg.de)
%%(stefka@uni-vt.bg)
}
\date{}
\begin{document}
\maketitle

\begin{abstract}
In this note we investigate extremal singly-even self-dual codes with minimal shadow.
For particular parameters we prove non-existence of such codes. By a result of Rains \cite{Rains-asymptotic},
the length of extremal singly-even self-dual codes is bounded. We give explicit bounds in case the shadow is minimal.
\end{abstract}

{\bf Index Terms:} {\it self-dual codes, singly-even codes, minimal shadow, bounds}

\section{  Introduction}

%%%%%%%%%%%%%%%%%%%%%%%%%%%%%

Let $C$ be a singly-even self-dual $[n, \frac{n}{2},d]$ code and let $C_0$ be its
doubly-even subcode. There are three cosets $C_1,C_2,C_3$ of $C_0$
such that $C_0^\perp = C_0 \cup C_1 \cup C_2 \cup C_3 $, where $C =
C_0 \cup C_2$. The set $S = C_1 \cup C_3=C_0^\perp \setminus C$ is
called the shadow of $C$. Shadows for self-dual codes were
introduced by Conway and Sloane \cite{CS} in order to derive new
upper bounds for the minimum weight of singly-even self-dual codes
and to provide restrictions on their weight enumerators.

According to \cite{Rains}
the minimum weight $d$ of a self-dual code of length $n$ is
bounded by $4[n/24]+4$ for $n \not\equiv 22 \pmod{24}$ and by $4[n/24]+6$  if $ n \equiv 22 \pmod {24}$. We call a
self-dual code meeting this bound extremal. Note that for
some lengths, for instance length $34$, no extremal self-dual codes
exist.

Some
properties of the weight enumerator of $S$ are given in the
following theorem.

\begin{thm}{\rm \cite{CS}}
Let $S(y) = \sum_{r=0}^n B_ry^r$ be the weight enumerator of $S$. Then
\begin{itemize}
\item $B_r = B_{n - r}$ for all $r$,
\item $B_r = 0$ unless $r\equiv n /2\pmod 4$,
\item $B_0 = 0$,
\item $B_r\le 1$ for $r < d /2$,
\item $B_{d /2}\le 2n / d$,
\item at most one $B_r$ is nonzero for $r < (d + 4 )/2$.
\end{itemize}
\end{thm}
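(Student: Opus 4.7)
The plan is to establish a single identity for the shadow enumerator and then read all six assertions off it. Specifically, I would first prove
\[
S(x,y)\;=\;\frac{1}{2^{n/2}}\,W_C\!\bigl(x+y,\,i(x-y)\bigr). \qquad (*)
\]
The derivation of $(*)$ has three inputs: (a) since $C_0$ is doubly-even while the codewords of $C_2=C\setminus C_0$ have weights $\equiv 2\pmod 4$, one checks $W_{C_0}(x,y)=\tfrac12\bigl(W_C(x,y)+W_C(x,iy)\bigr)$; (b) the MacWilliams transform $W_{C_0^\perp}(x,y)=\frac{1}{|C_0|}W_{C_0}(x+y,x-y)$; (c) self-duality of $C$ in the form $W_C(x+y,x-y)=2^{n/2}W_C(x,y)$. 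Combining (a)--(c) and subtracting $W_C$ from $W_{C_0^\perp}$ yields $(*)$.

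For item (i), swap $x$ and $y$ in $(*)$ and observe that every monomial in $W_C$ has even $y$-degree, so $W_C(x+y,-i(x-y))=W_C(x+y,i(x-y))$; hence $S(x,y)=S(y,x)$, giving $B_r=B_{n-r}$. Item (iii) is immediate since $\mathbf{0}\in C_0\subset C$. Item (ii) is the main computation: put $\zeta=e^{i\pi/4}$ and substitute $(x,y)\mapsto(\zeta x,\zeta^3 y)$ in $(*)$. The two arguments of $W_C$ become $\zeta(x+iy)$ and $\zeta^3(x-iy)$; pull out $\zeta^n$ and then exploit $W_C\circ T=W_C$ via $T(x+iy,ix+y)=(\zeta(x+y),\zeta^{-1}(x-y))$ to obtain
\[
S(\zeta x,\zeta^3 y)\;=\;i^n\,S(x,y).
\]
Matching the monomials $x^{n-r}y^r$ forces $\zeta^{2r-n}=1$, which is exactly $r\equiv n/2\pmod 4$.

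For items (iv)--(vi) the single input is the inclusion $S+S\subseteq C$, which follows from $C_1+C_1=C_3+C_3=C_0$ and $C_1+C_3=C_2$. If $s_1\ne s_2$ lie in $S$, then $s_1+s_2$ is a nonzero codeword, hence $\mathrm{wt}(s_1+s_2)\geq d$, while $\mathrm{wt}(s_1+s_2)\leq\mathrm{wt}(s_1)+\mathrm{wt}(s_2)$. This immediately contradicts $\mathrm{wt}(s_i)<d/2$, proving (iv). If both $s_i$ have weight exactly $d/2$, equality in the triangle inequality forces disjoint supports; fitting $B_{d/2}$ disjoint supports of size $d/2$ into $n$ coordinates yields (v). For (vi), suppose $B_{r_1},B_{r_2}>0$ with $r_1<r_2<(d+4)/2$: item (ii) gives $r_2-r_1\ge 4$, whereas the sum-in-$C$ inequality gives $r_1+r_2\ge d$, so $r_1>d/2-2$ and therefore $r_2-r_1<4$, a contradiction.

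The step I expect to be the main obstacle is (ii): the particular substitution $(x,y)\mapsto(\zeta x,\zeta^3 y)$ is engineered so that the powers of the eighth root of unity balance out precisely to let the $T$-invariance of $W_C$ apply, and careful bookkeeping of every power of $\zeta$ (and the evenness of $y$-degrees in $W_C$) is the only really delicate point in the argument.
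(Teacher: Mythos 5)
Your proposal is correct. The paper does not prove this theorem at all --- it is quoted from Conway and Sloane \cite{CS} without argument --- so there is no in-paper proof to compare against; your derivation is a faithful reconstruction of the standard Conway--Sloane argument. Both halves check out: the identity $S(x,y)=2^{-n/2}W_C\bigl(x+y,\,i(x-y)\bigr)$ follows from your inputs (a)--(c) with the right normalization $|C_0|=2^{n/2-1}$, the $\zeta$-bookkeeping in item (ii) does close up to give $S(\zeta x,\zeta^3 y)=i^n S(x,y)$ and hence $r\equiv n/2\pmod 4$, and the inclusion $S+S\subseteq C$ together with the triangle/support inequality correctly yields (iv)--(vi).
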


Elkies studied in \cite{Elkies} the minimum weight $s$
(respectively the minimum norm) of the shadow of self-dual codes
(respectively of unimodular lattices), especially in the cases
where it attains a high value. Bachoc and Gaborit proposed to
study the parameters $d$ and $s$ simultaneously
\cite{Bachoc_Gaborit}. They proved that $2d+s\le \frac{n}{2}+4$,
except in the case $n\equiv 22\pmod{24}$ where $2d+s\le
\frac{n}{2}+8$. They called the codes attaining this bound
\textit{$s$-extremal}. In this note we study singly-even self-dual
codes for which the minimum weight of the shadow has smallest possible value.
possible.

\begin{definition}
We say that a self-dual code $C$ of length $24m+8l+2r$  with $r=1,2,3$ and $l=0,1,2$ is a code with minimal shadow if $\wt(S)=r$. For $r=0$, $C$  is called of minimal shadow if $\wt(S)=4$.
\end{definition}

Self-dual codes with minimal shadow are subject of two previous
articles.  The paper
\cite{StefkaZlatko} is devoted to connections between 
self-dual codes of length $24m+8l+2$ with $\wt(S)=1$,
combinatorial designs and secret sharing schemes. The structure of
these codes are used to characterize access groups in a
secret sharing scheme based on codes. There are two types of schemes which are
 proposed -  with one-part secret and with two-part secret.
Moreover, some of the considered codes support 1- and 2-designs.
The performance of the extremal self-dual codes of length $24m+8l$
where $l=1,2$ have been studied in \cite{performance}.  In
particular, different types of codes with the same parameters are
 compared with regard to the decoding error probability. It
turned out that for lengths $24m+8$ singly-even codes with minimal
shadow perform better than doubly-even codes. Thus from the point
of view of data correction one is interested in singly-even codes
with minimal shadow. \\

This article is organized as follows. In Section 2 we prove that extremal self-dual codes with minimal shadow of length $24m+2t$ for $t=1$, 2, 3, 5, 11 do not exist. Moreover, for $t=4$, 6, 7 and 9, we obtain upper bounds for the length.
We also prove  that if extremal doubly-even self-dual codes of length $n=24m+8$ or $24m+16$ do not exist then extremal singly-even self-dual codes with minimal shadow do not exist for the same length. The only case for which we do not have a bound for the length is $n=24m+20$.

All computations have been carried out with Maple.

%%%%%%%%%%%%%%%%%%%%%%%%%%%%%%%%%%%%%%%%%%%%%%%%%%%%%%%%%%%%%%%%%%%Section 2
\section{Extremal self-dual codes with minimal shadow}

Let $C$ be a singly-even self-dual code of length $n=24m+8l+2r$ where $l=0,1,2$ and $r=0,1,2,3$.  The weight enumerator of $C$ and its
shadow are given by \cite{CS}:
$$W(y)=\sum_{j=0}^{12m+4l+r}a_jy^{2j}=\sum_{i=0}^{3m+l}c_i(1+y^2)^{12m+4l+r-4i}(y^2(1-y^2)^2)^i$$
$$S(y)=\sum_{j=0}^{6m+2l}b_jy^{4j+r}=\sum_{i=0}^{3m+l}(-1)^ic_i2^{12m+4l+r-6i}y^{12m+4l+r-4i}(1-y^4)^{2i}$$

Using these expressions we can write $c_i$ as a linear
combination of the $a_j$ and as a linear combination of the $b_j$
in the following way \cite{Rains}:
\begin{equation} \label{eq1}
c_i=\sum_{j=0}^i \alpha_{ij}a_j=\sum_{j=0}^{3m+l-i}\beta_{ij}b_j.
\end{equation}

\iffalse{

Suppose $C$ is an extremal singly-even self-dual code with minimal shadow. Thus $C$ has minimum distance $d=4m+4$ and $\wt(S)=r$ if $r=1,2,3$ and $\wt(S)=4$ if $r=0$ and $m\ge 2$. Furthermore,
$$a_0=1, \  a_1=a_2=\cdots=a_{2m+1}=0 \quad\mbox{and} \quad b_2=\cdots=b_{m-1}=0, \  b_0+b_1=1.$$
Finally note that  $b_0=1, b_1=0$ if $r>0$  and  $b_0=0, b_1=1$ if $r=0$.

}\fi

Suppose $C$ is an extremal singly-even self-dual code with minimal shadow, hence $d=4m+4$ and
$\wt(S)=r$ if $r=1,2,3$ and $\wt(S)=4$ if $r=0$. Obviously in this case
$a_0=1$, $a_1=a_2=\cdots=a_{2m+1}=0$. According to Theorem 1, we have
$b_0=1$ if $r>0$ and $m\ge 1$, and $b_0=0, b_1=1$ if $r=0$ and $m\ge 2$.

Moreover, if $r>0$ and $m\ge 1$ then $b_1=b_2=\cdots=b_{m-1}=0$.
Otherwise $S$ would contain a vector $v$ of weight less than or
equal to $4m-4+r$, and if $u\in S$ is a vector of weight $r$ then
$u+v\in C$ with $\wt(u+v)\le 4m+2r-4\le 4m+2$, a contradiction to
the minimum distance of $C$. Similarly, if $r=0$ and $m\ge 2$ then
$b_2=\cdots=b_{m-1}=0$.

\begin{remark} \label{remark1} \rm
For  extremal self-dual codes of length $24m+8l+2$ we furthermore have  $b_m=0$. Otherwise $S$ would contain a vector $v$ of weight $4m+1$, and if $u\in S$ is the vector of weight 1 which exists since $\wt(S)=1$, then $u+v\in C$ with $\wt(u+v)\le 4m+2$ contradicting the minimum distance of $C$.
\end{remark}

If $m\ge 2$ we have by (\ref{eq1})
\begin{equation} \label{eq2}
c_{2m+1}=\alpha_{2m+1,0}=\beta_{2m+1,\epsilon}+\sum_{j=m}^{m+l-1}\beta_{2m+1,j}b_j,
\end{equation}
where $\epsilon =1$ for $r=0$ and $\epsilon =0$ otherwise, since
$3m+l-2m-1 = m+l-1$. To evaluate this equation, which turns out to
be crucial in the following, we need to consider the coefficients
$\alpha_{i0}$ in details. In order to do this we denote by
$\alpha_{i}(n)$ the coefficient $\alpha_{i0}$ if $n$ is the length
of the code. According to \cite{Rains} we have
\begin{equation} \label{alpha}
\alpha_i(n)=\alpha_{i0}= -\frac{n}{2i}[\textrm{coeff. of}
\ y^{i-1} \ \textrm{in} \ (1+y)^{-n/2-1+4i}(1-y)^{-2i}].
\end{equation}
%As we have already mentioned, for the extremal self-dual codes $c_{2m+1}=\alpha_{2m+1}(n)$ and $c_{2m}=\alpha_{2m}(n)$. Consider these functions in details.
Let $t=4l+r$ and $n=24m+8l+2r=24m+2t$. Then

$$\alpha_{2m+1}(n)=-\frac{12m+t}{2m+1}
[\mbox{coeff.} \ \mbox{of} \ y^{2m} \ \mbox{in} \
(1+y)^{-12m-t-1+8m+4}(1-y)^{-4m-2}]$$
$$=-\frac{12m+t}{2m+1}
[\mbox{coeff.} \ \mbox{of} \ y^{2m} \ \mbox{in} \
(1+y)^{-4m-t+3}(1-y)^{-4m-2}]$$
For $t>5$ we obtain
$$\alpha_{2m+1}(n)=-\frac{12m+t}{2m+1}
[\mbox{coeff.} \ \mbox{of} \ y^{2m} \ \mbox{in} \
(1-y^2)^{-4m-t+3}(1-y)^{t-5}],$$
and if $t\le 5$ then $$\alpha_{2m+1}(n)=-\frac{12m+t}{2m+1}
[\mbox{coeff.} \ \mbox{of} \ y^{2m} \ \mbox{in} \
(1-y^2)^{-4m-2}(1+y)^{5-t}].$$
Since
$$(1-y^2)^{-a}=\sum_{0\le j} {-a\choose
j}(-1)^jy^{2j}=\sum_{0\le j} {a+j-1\choose j}y^{2j} \ \ \mbox{for} \ a>0,$$ it follows  in case $t\le 5$ that
\begin{align*}
\alpha_{2m+1}(n)&=-\frac{12m+t}{2m+1}
[\mbox{coeff.} \ \mbox{of} \ y^{2m} \ \mbox{in} \
(1+y)^{5-t}\sum_{j=0}^{m} {4m+j+1\choose j}y^{2j}]\\
&=-\frac{12m+t}{2m+1}\sum_{s=0}^{[\frac{5-t}{2}]}{5-t\choose 2s}{5m+1-s\choose m-s},
 \end{align*}
and in case $t>5$ that
\begin{align*}
\alpha_{2m+1}(n)&=-\frac{12m+t}{2m+1}
[\mbox{coeff.} \ \mbox{of} \ y^{2m} \ \mbox{in} \
(1-y)^{t-5}\sum_{j=0}^{m} {4m+t+j-4\choose j}y^{2j}]\\
&=-\frac{12m+t}{2m+1}\sum_{s=0}^{[\frac{t-5}{2}]}{t-5\choose 2s}{5m+t-4-s\choose m-s}.
\end{align*}
For the different lengths $n$ the values of $\alpha_{2m+1}(n)$ are listed in Table \ref{Table:a2m+1}.

%%%%%%%%%%%%%%%%%%%%%%%%%%%%%%%%%%%%%
\begin{table}[htb]
\centering \caption{The values $\alpha_{2m+1}(n)$ for  extremal self-dual codes} \vspace*{0.2in}
\label{Table:a2m+1}
%{\small
{\footnotesize
\begin{tabular}{|c|c|c|c|}
\noalign{\hrule height1pt} %\hline
$n$&$24m+2$&$24m+10$&$24m+18$\\
\hline
$\alpha_{2m+1}$&$-\ds\frac{(12m+1)(56m+4)}{(2m+1)(m-1)}{5m-1\choose m-2}$&$-\ds\frac{12m+5}{2m+1}{5m+1\choose m}$&$-\ds\frac{12(7m+5)(4m+3)}{m(m-1)}{5m+3\choose m-2}$\\
\hline\hline
$n$&$24m+4$&$24m+12$&$24m+20$\\
\hline
$\alpha_{2m+1}$&$-\ds\frac{2(6m+1)(8m+1)}{m(2m+1)}{5m\choose m-1}$&$-6\ds{5m+2\choose m}$& $-\ds\frac{20(6m+5)(4m+3)}{m(m-1)}{5m+4\choose m-2}$\\
\hline\hline
$n$&$24m+6$&$24m+14$&$24m+22$\\
\hline
$\alpha_{2m+1}$&$-\ds\frac{3(4m+1)(6m+1)}{m(2m+1)}{5m\choose m-1}$&$-\ds\frac{3(12m+7)}{m}{5m+2\choose m-1}$&$-\ds\frac{6(12m+11)(6m+5)(8m+7)}{m(m-1)(m-2)}{5m+4\choose m-3}$\\
\hline\hline
$n$&$24m+8$&$24m+16$&\\
\hline
$\alpha_{2m+1}$&$-\ds\frac{4(3m+1)}{2m+1}{5m+1\choose m}$&$-\ds\frac{16(3m+2)}{m}{5m+3\choose m-1}$&\\
\noalign{\hrule height1pt}
\end{tabular}
}
\end{table}
\bigskip

To evaluate equation (\ref{eq2}) we also need $\beta_{ij}$ which are known due to
\cite{Rains}. Here we have
\begin{equation} \label{beta}
\beta_{ij}=(-1)^i2^{-n/2+6i}\frac{k-j}{i}{k+i-j-1\choose k-i-j}, \end{equation}
where $k=\lfloor n/8\rfloor=3m+l$. In particular,
%$$\beta_{2m+1,0}=-2^{-12m-4l-t+12m+6}\frac{3m+l}{2m+1}{3m+l+2m+1-1\choose 3m+l-2m-1},$$
$$\beta_{2m+1,j}=-2^{6-t}\frac{3m+l-j}{2m+1}{5m+l-j\choose m+l-1-j} \quad \mbox{and} \quad \beta_{2m+1,m+l-1}=-2^{6-t}.$$

\medskip
Now we are prepared to prove:

\begin{thm} \label{non-existence}
Extremal self-dual codes of lengths $n=24m+2$, $24m+4$, $24m+6$, $24m+10$ and $24m+22$ with minimal shadow do not exist.
\end{thm}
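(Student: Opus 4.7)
My plan is to apply equation (\ref{eq2}), and in the case $n=24m+22$ an analogue at $i=2m+2$, in order to derive an arithmetic identity on the low-weight shadow coefficients $b_j$, and then to contradict either $b_j\ge 0$ or the fact that the two sides of the identity are simply unequal. The combinatorial input is that under minimality of the shadow, combined with the bound $d=4m+4$ (resp.\ $d=4m+6$ when $n\equiv 22\pmod{24}$), almost all small-index $b_j$ vanish, so only a handful of unknowns remain.

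I will split the argument according to $l$. For the three lengths with $l=0$, namely $n=24m+2,\,24m+4,\,24m+6$, the sum on the right of (\ref{eq2}) is empty (since $m+l-1=m-1<m$), and the identity collapses to $\alpha_{2m+1}(n)=\beta_{2m+1,0}$. Substituting Table \ref{Table:a2m+1} and formula (\ref{beta}) (and rewriting $\binom{5m-1}{m-2}=\tfrac{m-1}{5m}\binom{5m}{m-1}$, so that the expression is valid for all $m\ge 1$), the identity reduces in each case to a polynomial in $m$ with only positive coefficients; for $n=24m+2$ it becomes $192m^2+104m+4=0$, and the cases $n=24m+4,\,24m+6$ are analogous. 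No such equation has a positive solution, yielding the desired contradiction.

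For $n=24m+10$ (so $l=1,\,r=1$) equation (\ref{eq2}) reads $\alpha_{2m+1}(n)=\beta_{2m+1,0}+\beta_{2m+1,m}\,b_m$; but Remark \ref{remark1} forces $b_m=0$, so the identity again collapses to $\alpha_{2m+1}(n)=\beta_{2m+1,0}$, which by the table and (\ref{beta}) reads $12m+5=2(3m+1)$, i.e.\ $6m+3=0$, impossible.

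The hard case is $n=24m+22$ (so $l=2,\,r=3$), because (\ref{eq2}) now carries two unknowns $b_m,\,b_{m+1}$, and no obvious analogue of Remark \ref{remark1} pins either of them down. The remedy is to exploit that $n\equiv 22\pmod{24}$, so an extremal code has $d=4m+6$ rather than $4m+4$, and hence $a_{2m+2}=0$ as well. Applying (\ref{eq1}) at $i=2m+2$ (whose $\beta$-sum terminates at $j=m$, since $3m+l-i=m$) produces the single equation
$$\alpha_{2m+2}(n) = \beta_{2m+2,0} + \beta_{2m+2,m}\,b_m.$$
A short computation from (\ref{alpha}) shows $\alpha_{2m+2}(n)=0$: the exponents $-n/2-1+4(2m+2)$ and $-2(2m+2)$ both equal $-4m-4$, so the relevant generating function is $(1-y^2)^{-4m-4}$, which contains only even powers of $y$, while the coefficient sought is at $y^{2m+1}$. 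On the other hand, (\ref{beta}) yields $\beta_{2m+2,0}=\tfrac{3m+2}{m+1}\binom{5m+3}{m}>0$ and $\beta_{2m+2,m}=2>0$, so $b_m<0$, a contradiction. Finally, the small values of $m$ at which a table entry has a vanishing denominator (e.g.\ $m=1$ for $n=24m+2$) should be verified directly from the defining relation (\ref{alpha}); the same inequalities persist.
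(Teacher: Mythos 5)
Your proof is correct, and for four of the five lengths it coincides with the paper's argument: for $n=24m+2,\,24m+4,\,24m+6$ the sum in (\ref{eq2}) is indeed empty and the identity $\alpha_{2m+1}(n)=\beta_{2m+1,0}$ reduces to a polynomial equation with positive coefficients (your $192m^2+104m+4=0$ is four times the paper's $48m^2+26m+1=0$), and for $n=24m+10$ you invoke Remark \ref{remark1} exactly as the paper does to get $6m+3=0$. Where you genuinely diverge is $n=24m+22$: the paper disposes of this case in one line by citing Rains' result that the shadow of an extremal code of this length has minimum weight $4m+7$ (even correcting a misprint in \cite{Rains}), so the shadow can never have weight $3$. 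You instead keep the machinery uniform: since $d=4m+6$ forces $a_{2m+2}=0$, you evaluate (\ref{eq1}) at $i=2m+2$, observe the pleasant coincidence that the two exponents in (\ref{alpha}) both equal $-4m-4$ so that $\alpha_{2m+2}(24m+22)=0$ (only even powers survive in $(1-y^2)^{-4m-4}$), and then conclude $b_m=-\tfrac{3m+2}{2(m+1)}\binom{5m+3}{m}<0$ from $\beta_{2m+2,0},\beta_{2m+2,m}>0$ and $b_0=1$, contradicting nonnegativity of shadow coefficients. Your route is self-contained and proves directly what is needed without appealing to the full strength of Rains' shadow-weight theorem, at the cost of one extra coefficient computation; the paper's route is shorter but imports an external structural fact. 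The only loose ends in your write-up are that the reductions for $n=24m+4$ and $24m+6$ are asserted rather than carried out (they do work, giving $24m^2+14m+1=0$ and $48m^2+30m+3=0$), and that the vanishing $b_1=\cdots=b_{m-1}=0$ in the length-$24m+22$ case should be noted to follow from the same coset argument with $d=4m+6$ in place of $4m+4$; both are routine.
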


\begin{proof}
According to \cite{Rains} any extremal self-dual code of length $24m+22$ has minimum distance $4m+6$ and the minimum weight of its shadow is $4m+7$.
Thus the shadow is not minimal since a minimal shadow must have minimum weight $3$.
 (There is a misprint in \cite{Rains} where it is stated that the minimum weight of the shadow is $4m+6$. But actually the weights in this shadow
 are of type $4j+3$).

In the other four cases we have
\begin{equation} \label{eq3}
c_{2m+1}=\alpha_{2m+1,0}=\beta_{2m+1,0}
\end{equation}
 by (\ref{eq2}).
In case $n=24m +10$ we use the fact that $b_m=0$, according to Remark \ref{remark1}.

 Simplifying  equation (\ref{eq3}) according to Table \ref{Table:a2m+1}  we obtain
\begin{align*}
48m^2+26m+1=0,& \ \ \ \ \mbox{if} \ n=24m+2\\
24m^2+14m+1=0,& \ \ \ \ \mbox{if} \ n=24m+4\\
48m^2+30m+3=0,& \ \ \ \ \mbox{if} \ n=24m+6\\
6m+3=0,& \ \ \ \ \mbox{if} \ n=24m+10.
\end{align*}

Since all these equations have no solutions $m\geq 0$ extremal self-dual codes with minimal shadow  do not exist for $ n \equiv 2,4,6,10 \bmod 24$.
\end{proof}

\begin{remark}\rm So far
no extremal self-dual codes of length $24m+2t$ are known  for $t=1,2,3,5$. According to \cite{nonexistence}  extremal self-dual codes of length $24m+2r$
do not exist for $r=1,2,3$ and $m=1,2,\dots,6,8,\dots,12,16,\dots,22$. Thus if there is (for instance) a self-dual $[170, 85, 32]$ code it will not have minimal shadow, by Theorem \ref{non-existence}.
\end{remark}

The next result is a crucial observation in order to prove explicit bounds for the existence of extremal singly-even self-dual codes.

\begin{thm}\label{unique}
Extremal singly-even self-dual codes with minimal shadow of lengths $n=24m+8$, $24m+12$, $24m+14$ and $24m+18$  have uniquely
determined weight enumerators.
\end{thm}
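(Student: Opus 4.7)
The plan is a dimension/index count. Since
\[
W(y) \;=\; \sum_{i=0}^{3m+l} c_i \,(1+y^2)^{12m+4l+r-4i}\bigl(y^2(1-y^2)^2\bigr)^i
\]
and the $3m+l+1$ Gleason-basis polynomials on the right are linearly independent, it suffices to show that each coefficient $c_i$ is forced by the hypotheses. The two expressions in (\ref{eq1}) -- namely $c_i=\sum_{j=0}^{i}\alpha_{ij}a_j$ and $c_i=\sum_{j=0}^{3m+l-i}\beta_{ij}b_j$ -- will each cover a contiguous range of indices, and in every case the two ranges will meet without gap.

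Extremality gives $a_0=1$ and $a_1=\cdots=a_{2m+1}=0$, so the $\alpha$-formula determines the $2m+2$ coefficients $c_0,\ldots,c_{2m+1}$ outright. For the complementary range I intend to use the $\beta$-formula, which at index $i$ needs $b_0,\ldots,b_{3m+l-i}$. The minimal-shadow hypothesis together with the vanishings derived just before Remark \ref{remark1} supplies $b_0,b_1,\ldots,b_{m-1}$ explicitly in all four cases (the values $b_0=0,b_1=1$ in the case $r=0$, and $b_0=1$ in the cases $r>0$, with the remaining $b_j$ in that range all zero).

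For the three lengths $n=24m+8$, $24m+12$, $24m+14$ (all with $l=1$), the $\beta$-formula therefore determines $c_i$ whenever $3m+1-i\le m-1$, i.e.\ for $i=2m+2,\ldots,3m+1$, contributing $m$ further coefficients. The running total is $2m+2+m=3m+2=3m+l+1$, so every $c_i$ is pinned down and the weight enumerator $W(y)$ is unique. The value $b_m$ that has not yet been used is then forced by the consistency relation (\ref{eq2}).

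For $n=24m+18$ (where $l=2$) the analogous count falls one short. The key additional input is Remark \ref{remark1}, which is applicable precisely because the shadow has minimum weight $r=1$; it yields $b_m=0$. With $b_0,\ldots,b_m$ now all known, the $\beta$-formula determines $c_i$ for $3m+2-i\le m$, i.e.\ for $i=2m+2,\ldots,3m+2$, contributing $m+1$ coefficients; the total becomes $2m+2+m+1=3m+3=3m+l+1$, again exhausting all $c_i$, and $b_{m+1}$ is subsequently read off from (\ref{eq2}). The only real obstacle is that the naive $\alpha$/$\beta$ bookkeeping closes up on the nose only when $l=1$; for the length $24m+18$ one must recognise that Remark \ref{remark1} supplies exactly the single extra vanishing required to bridge the gap, which is why the argument does not extend to the remaining open case $n=24m+20$.
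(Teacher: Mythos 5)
Your proposal is correct and follows essentially the same route as the paper: extremality fixes $c_0,\dots,c_{2m+1}$ via the $\alpha$-expansion, and the minimal-shadow vanishings of the $b_j$ fix $c_{2m+2},\dots,c_{3m+l}$ via the $\beta$-expansion, with Remark \ref{remark1} supplying the extra vanishing $b_m=0$ needed when $l=2$. Your bookkeeping even makes explicit the role of Remark \ref{remark1} for $n=24m+18$, which the paper's proof leaves implicit; the only cosmetic caveat is that the vanishing statements for the $b_j$ assume $m\ge 2$ (resp.\ $m\ge 1$), and the paper disposes of $m=0,1$ by citing the known classifications.
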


\begin{proof}
For $m=0$ and $m=1$ see Remark 3 and the examples at the end of the paper. Now let $m\ge 2$.

In case $n=24m+12$ or $n=24m+14$
we have $$c_i=\alpha_{i0}=\beta_{i0}+\sum_{j=m}^{3m+1-i}\beta_{ij}b_j \quad \mbox{for} \  i\le 2m+1 \quad \mbox{and} $$
$$ c_i=\alpha_{i0}+\sum_{j=2m+2}^{i}\alpha_{ij}a_j=\beta_{i0} \quad \mbox{for} \  i>2m+1.$$
Therefore $c_i=\alpha_{i0}$ for $i=0,1,\dots,2m+1$
and $c_i=\beta_{i0}$ for $i=2m+2,\dots,3m+1$.

In the case $n=24m+8$ we have $b_0=0$, $b_1=1$ and $b_2=\cdots=b_{m-1}=0$. Hence $c_i=\alpha_{i0}$ for $i=0,1,\dots,2m+1$
and $c_i=\beta_{i1}$ for $i=2m+2,\dots,3m+1$.

Similarly, if $n=24m+18$ we obtain $c_i=\alpha_{i0}$ for
$i=0,1,\dots,2m+1$ and $c_i=\beta_{i0}$ for $i=2m+2,\dots,3m+2$.
In both cases the weight enumerator can be computed as above.

By (\ref{alpha}) and (\ref{beta}),
 the values of
$c_i$ can be calculated and they  depend only on the length $n$.
Thus the weight enumerators are unique in all cases.
\end{proof}

In \cite{Zhang}, Zhang obtained upper bounds for the lengths of the extremal binary doubly-even codes. He proved that extremal doubly-even codes of length $n=24m+8l$ do not exist if $m\ge 154$ (for $l=0$), $m\ge 159$ (for $l=1$) and $m\ge 164$ (for $l=2$). For  extremal singly-even codes there is also a bound due to Rains \cite{Rains-asymptotic}. Unfortunately, he only states the existence of a bound. In the next corollary we give explicit bounds for
extremal singly-even self-dual codes with minimal shadow for lengths congruent 8, 12, 14 and 18 $\bmod \, 24$.

In  the proof we need the value of $c_{2m}=\alpha_{2m,0}$. According to \cite{Rains} we have
\begin{align*}
\alpha_{2m}(n)&=-\frac{24m+2t}{4m}[\textrm{coeff. of}
\ y^{2m-1} \ \textrm{in} \ (1+y)^{-4m-t-1}(1-y)^{-4m}]\\
&=-\frac{12m+t}{2m}[\textrm{coeff. of}
\ y^{2m-1} \ \textrm{in} \ (1-y)^{t+1}(1-y^2)^{-4m-t-1}]\\
&=-\frac{12m+t}{2m}
[\mbox{coeff.} \ \mbox{of} \ y^{2m-1} \ \mbox{in} \
(1-y)^{t+1}\sum_{j=0}^{m} {4m+t+j\choose j}y^{2j}]\\
&=\frac{12m+t}{2m}\sum_{s=1}^{[\frac{t+2}{2}]}{t+1\choose 2s-1}{5m+t-s\choose m-s}
\end{align*}
where $t=4l +r$ and $n=24m+8l +2r = 24m + 2t$.
The values for $\alpha_{2m}(n)$ are listed in Table \ref{Table:a2m}.

%%%%%%%%%%%%%%%%%  table  %%%%%%%%%%%%%%%%%
\begin{table}[htb]
\centering \caption{The values $\alpha_{2m}(n)$ for an extremal self-dual $[n=24m+2t,\frac{n}{2},4m+4]$ code} \vspace*{0.2in}
\label{Table:a2m}
%{\small
{\footnotesize
\begin{tabular}{|c|c|}
\noalign{\hrule height1pt} %\hline
$n$&$\alpha_{2m}(n)$\\
\hline
$24m+8$& $\ds\frac{8(4m+1)(11m+3)(3m+1)}{m(m-1)(m-2)}{5m+1\choose m-3}$\\
\hline
$24m+12$& $\ds\frac{24(116m^2+79m+15)(1+2m)^2}{m(m-1)(m-2)(m-3)}{5m+2\choose m-4}$\\
\hline
$24m+14$& $\ds\frac{24(1+2m)(12m+7)(28m^2+22m+5)}{m(m-1)(m-2)(m-3)}{5m+3\choose m-4}$\\
\hline
$24m+16$& $\ds\frac{16(3m+2)(2m+1)(1216m^3+1956m^2+1073m+210)}{m(m-1)(m-2)(m-3)(m-4)}{5m+3\choose m-5}$\\
\hline
$24m+18$& $\ds\frac{120(2m+1)(4m+3)(176m^3+308m^2+189m+42)}{m(m-1)(m-2)(m-3)(m-4)}{5m+4\choose m-5}$\\
\hline
$24m+20$& $\ds\frac{16(6m+5)(2m+1)(4m+3)(1592m^3+3280m^2+2363m+630)}{m(m-1)(m-2)(m-3)(m-4)(m-5)}{5m+4\choose m-6}$\\
\noalign{\hrule height1pt}
\end{tabular}
}
\end{table}

\bigskip
Furthermore, $\beta_{2m,j}=2^{-t}\ds\frac{3m+l-j}{2m}{5m+l-1-j\choose m+l-j}$. Hence $\beta_{2m,m+l}=2^{-t}$ and $\beta_{2m,m+l-1}=2^{1-t}(2m+1)$.

\begin{cor} \label{bound}
There are no extremal singly-even self-dual codes of length $n$ with minimal shadow if
\begin{itemize}
\item[(i)] $n=24m+8$ and $m\ge 53$,
\item[(ii)] $n=24m+12$ and $m\ge 142$,
\item[(iii)] $n=24m+14$ and $m\ge 146$,
\item[(iv)] $n=24m+18$ and $m\ge 157$.
\end{itemize}
\end{cor}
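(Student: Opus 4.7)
Once Theorem \ref{unique} is invoked, the weight enumerator (and hence the shadow enumerator) of a hypothetical extremal singly-even self-dual code with minimal shadow is completely determined for each of the four lengths. Every $b_j$ is therefore an explicit rational function of $m$, and the corollary will follow as soon as we exhibit, in each case, a single $b_j$ that is strictly negative for $m$ beyond the claimed threshold.

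My candidate is $b_{m+l}$, which is $b_{m+1}$ for $n=24m+8, 24m+12, 24m+14$ and $b_{m+2}$ for $n=24m+18$. This is the first shadow coefficient to which $c_{2m}$ contributes --- a general $c_i$ feeds $b_j$ precisely when $i\ge |j-(3m+l)|$ --- so its closed form naturally couples to the explicit expression for $\alpha_{2m,0}$ read off from Table \ref{Table:a2m}. To extract $b_{m+l}$ I would combine the identities $c_i=\sum_j\beta_{ij}b_j$ for $i=2m,\,2m+1,\dots,2m+l$ with the uniquely determined values of $c_i$ on the left ($\alpha_{i0}$ from the code side for $i\le 2m+1$ and $\beta_{i0}$ from the shadow side for $i\ge 2m+2$, as in the proof of Theorem \ref{unique}), and back-substitute using (\ref{beta}), the vanishing $b_1=\dots=b_{m-1}=0$, and the known value $b_0=1$ (or $b_0=0$, $b_1=1$ when $n=24m+8$). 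After simplifying the binomial ratios $\binom{5m+c}{m-d}/\binom{5m}{m}$ from Tables \ref{Table:a2m+1} and \ref{Table:a2m} to rational functions of $m$, $b_{m+l}$ appears as $P(m)\binom{5m}{m}$ for an explicit polynomial $P(m)$, up to a positive constant.

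The decisive step is showing $P(m)<0$ for large $m$. The positive input, coming from $\alpha_{2m,0}$, reduces to a constant multiple of $\binom{5m}{m}$, whereas the negative contributions, coming from the lower-indexed $\beta_{ij}b_j$, have leading term of order $m$ when $l=1$ and of order $m^2$ when $l=2$. Hence $P(m)$ is eventually negative and $b_{m+l}<0$ for all large $m$, contradicting the non-negativity of shadow coefficients. The precise thresholds $m\ge 53, 142, 146, 157$ are then obtained by evaluating $P(m)$ in Maple (the computational tool already announced by the authors) and identifying the smallest $m$ at which it becomes negative.

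The chief obstacle is algebraic bookkeeping. The formulas in Table \ref{Table:a2m} carry polynomial denominators up to $m(m-1)\cdots(m-5)$, so a handful of very small $m$ must be verified separately, but these all lie far below the stated thresholds. A further point requiring care arises in the $l=2$ case: a direct computation from (\ref{beta}) gives $\beta_{2m+2,m}=0$, so the equation at $i=2m+2$ collapses to $c_{2m+2}=\beta_{2m+2,0}$ and does \emph{not} determine $b_m$; one must therefore solve a genuinely $3\times 3$ subsystem rather than a purely triangular one, which is precisely why the resulting threshold for $n=24m+18$ is so much larger than the one for $n=24m+8$.
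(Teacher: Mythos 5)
Your overall strategy is the paper's: pin down the weight enumerator via Theorem \ref{unique}, extract $b_{m+1}$ (resp.\ $b_{m+2}$ for $n=24m+18$) by back-substitution through the equations $c_i=\sum_j\beta_{ij}b_j$ at $i=2m+l,\dots,2m$, and let Maple locate the threshold where the resulting polynomial factor turns negative; the paper does exactly this, obtaining e.g.\ $b_{m+1}=\frac{16(6m+1)(-4m^3+209m^2+141m+24)}{5m(m+1)(4m+3)}\binom{5m+1}{m-1}$ for $n=24m+8$. Two details in your write-up are off, though. First, the coupling condition is $j\le 3m+l-i$ (equivalently $i\le 3m+l-j$), not $i\ge|j-(3m+l)|$; your subsequent description of the back-substitution is nonetheless the right one. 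Second, and more substantively, your account of the $l=2$ case is wrong: from (\ref{beta}) with $k=3m+2$, $i=2m+2$, $j=m$ one gets $\frac{k-j}{i}=1$ and $\binom{4m+3}{0}=1$, so $\beta_{2m+2,m}=2^{3}\ne 0$. The equation at $i=2m+2$ therefore does involve $b_m$, but its code-side value involves the unknown $a_{2m+2}$, so the ``$3\times 3$ subsystem'' you propose is actually underdetermined (three unknowns $b_m,b_{m+1},b_{m+2}$ against only the two usable equations at $i=2m,2m+1$). What rescues the case $n=24m+18$ is Remark \ref{remark1}: since $r=1$ the shadow contains a weight-one vector, which forces $b_m=0$ combinatorially; with that input the system stays triangular ($i=2m+1$ gives $b_{m+1}$, then $i=2m$ gives $b_{m+2}$), which is how the paper proceeds and also why Theorem \ref{unique} holds for this length in the first place. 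So your proof goes through only because you invoke Theorem \ref{unique} as a black box; the mechanism you offer for case (iv) would not.
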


\begin{proof}
Using the equation
 $$c_i=\alpha_{i0}=\beta_{i\epsilon }+\sum_{j=m}^{3m+l-i}\beta_{ij}b_j \quad \mbox{for} \  i\le 2m+1, $$
 where $\epsilon =1$ if $n=24m+8$ and $\epsilon =0$ in the other cases,
we see that
$$b_{m+l-1}=-2^{t-6}(\alpha_{2m+1,0}-\beta_{2m+1,\epsilon}).$$
The values of $b_m$ for $n=24m+8$, $24m+12$ and $24m+14$ are given in Table \ref{Table:bm}.

%%%%%%%%%%%%%%%%%  table  %%%%%%%%%%%%%%%%%
\begin{table}[htb]
\centering \caption{The parameter $b_{m}$ for extremal self-dual codes of length $n$} \vspace*{0.2in}
\label{Table:bm}
%{\small
{\footnotesize
\begin{tabular}{|c|c|c|c|}
\noalign{\hrule height1pt} %\hline
$n$&$24m+8$&$24m+12$&$24m+14$\\
\hline
$b_m$&$\ds\frac{6m+1}{m}{5m\choose m-1}$&$\ds\frac{12m+5}{2m+1}{5m+1\choose m}$& $\ds\frac{168m^2+164m+39}{(2m+1)(4m+3)}{5m+1\choose m}$\\
\noalign{\hrule height1pt}
\end{tabular}
}
\end{table}
%%%%%%%%%%%%%%%%%%%%%%%%%%%%%%%%%%%%%%%%%%%%%%%%%%%%%%%%%%%%%%%%%%%Section 2
\noindent
If $n=24m+18$ we have $$b_m=0 \quad \mbox{and} \quad
b_{m+1}=\frac{(24m+17)(17m+10)}{(2m+1)(4m+5)}{5m+2\choose m+1}.$$
In the first three  cases we compute
$$ b_{m+1} = \frac{\alpha_{2m,0} - \beta_{2m,\epsilon} - \beta_{2m,m}b_m}{\beta_{2m,m+1}}.$$

If $n=24m+8$ we obtain
$$b_{m+1}=\frac{16(6m+1)(-4m^3+209m^2+141m+24)}{5m(m+1)(4m+3)}{5m+1\choose m-1}$$
In case $m\ge 53$ the polynomial $-4m^3+209m^2+141m+24$ takes negative values, hence $b_{m+1}<0$, a contradiction.

For $24m+12$ we have
$$b_{m+1}= \frac{2(12m+5)(-32m^4+4496m^3+4242m^2+1257m+117)}{(5m+1)(4m+3)(4m+5)(2m+3)}
{5m+2\choose m+1}$$
If $m\ge 142$ the polynomial $-32m^4+4496m^3+4242m^2+1257m+117$ takes negative values, hence $b_{m+1}<0$, a contradiction.

For $24m+14$ the calculations lead to
$$b_{m+1}= \frac{2(-5376m^6+772352m^5+1663728m^4+1386448m^3+557970m^2+107643m+7875)}{(4m+3)(4m+5)(2m+3)(4m+7)(5m+1)}{5m+2\choose m+1}$$
which is negative if $m\ge 146$.

In the last case we have to compute
$$ b_{m+2} = \frac{\alpha_{2m,0} - \beta_{2m,0} - \beta_{2m,m+1}b_{m+1}}{\beta_{2m,m+2}}.$$
The computations yield
$$b_{m+2}=\frac{ 2(24m+17)(-544m^5+83696m^4+184210m^3+149089m^2+52809m+6930)}{(4m+5)(2m+3)(4m+7)(4m+9)(5m+2)} {5m+3\choose m+2}$$
which is negative for $m\ge 157$.
\end{proof}

\begin{prop} \label{de}
If there are no extremal doubly-even self-dual codes of length $n=24m+8$ or $24m+16$  then there are no extremal singly-even self-dual codes of length $n$ with minimal shadow.
\end{prop}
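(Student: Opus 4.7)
The plan is to prove the contrapositive by constructing, from any extremal singly-even self-dual code $C$ with minimal shadow of length $n=24m+8$ or $24m+16$, an extremal doubly-even self-dual code $D$ of the same length. Let $C_0$ be the doubly-even subcode and write $C_0^\perp = C_0 \cup C_1 \cup C_2 \cup C_3$ with $C = C_0 \cup C_2$ and $S = C_1 \cup C_3$. Because $d(C) = 4m+4 \equiv 0 \pmod{4}$, every minimum-weight codeword of $C$ is doubly-even, so $d(C_0) = 4m+4$; and since weights in $C_2$ are $\equiv 2 \pmod{4}$, this gives the sharper bound $d(C_2) \ge 4m+6$. Since $n/2 \equiv 0 \pmod{4}$, shadow weights are multiples of $4$, and the minimal shadow hypothesis together with the discussion preceding Remark \ref{remark1} yields a unique vector $v \in S$ of weight $4$, together with $b_j = 0$ for $2 \le j \le m-1$.

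After relabeling $C_1$ and $C_3$ if necessary, I may assume $v \in C_1$. The construction is then to set $D := C_0 \cup C_3$. Since $[C_0^\perp:C_0]=4$, the dimension of $D$ is $n/2$, and self-orthogonality is immediate because $C_3 \subseteq C_0^\perp$ and every $w \in C_3$ has even weight; hence $D$ is self-dual. Doubly-evenness is also immediate: $C_0$ is doubly-even by definition, and $C_3 \subseteq S$ consists of vectors of weight divisible by $4$.

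The heart of the argument is showing that $D$ has minimum weight at least $4m+4$, which reduces to ruling out $w \in C_3$ with $\wt(w) < 4m+4$. A weight-$4$ vector in $C_3$ would contradict the uniqueness of $v \in C_1$; weights $8, 12, \ldots, 4m-4$ are excluded by $b_j = 0$ for $2 \le j \le m-1$. The remaining case, $\wt(w) = 4m$, is the main obstacle and is where the coset arithmetic becomes essential: I would observe that $v + w \in C_1 + C_3 = C_2$ is a nonzero codeword with $\wt(v+w) \le \wt(v) + \wt(w) = 4m+4$, which contradicts $d(C_2) \ge 4m+6$. Once this last case is settled, $D$ is an extremal doubly-even self-dual code of length $n$, contradicting the hypothesis; the small cases $m \le 1$ are vacuous because extremal doubly-even self-dual codes are known to exist for lengths $8$, $16$, $32$ and $40$.
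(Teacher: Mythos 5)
Your proposal is correct and follows essentially the same route as the paper: prove the contrapositive, place the weight-$4$ shadow vector in $C_1$, take $D=C_0\cup C_3$, and bound $\wt(C_3)$ from below by adding the weight-$4$ vector to get into $C_2$, whose minimum weight is at least $4m+6$. The only (immaterial) difference is that the paper derives $\wt(v)\ge 4m+4$ in one stroke from the identity $\wt(u+v)=\wt(u)+\wt(v)-2\wt(u*v)$ together with $u*v\equiv 1 \pmod 2$, whereas you split into cases and use subadditivity plus the divisibility of shadow weights by $4$.
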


\begin{proof}
We shall prove the contraposition. Let $C$ be a singly-even
self-dual $[n=24m+8l,12m+4l,4m+4]$ code and suppose that the coset
$C_1$ contains the vector $u$ of weight 4. If $v\in C_3$ then
$u+v\in C_2$ and hence $\wt(u+v)\ge 4m+6$. It follows that
$$\wt(v)\ge 4m+6-4+2\wt(u*v)\ge 4m+4,$$ since $C_1$ is not orthogonal to $C_3$, which means that $u*v\equiv 1\pmod 2$ for $u\in C_1, v\in C_3$ (see \cite{BrualdiPless}).
Thus $\wt(C_3)\ge 4m+4$.
Therefore $C_0\cup C_3$ is an extremal doubly-even code with
parameters $[24m+8l,12m+4l,4m+4]$.
\end{proof}

\begin{cor} \label{24m+16} There are no extremal singly-even self-dual codes with minimal shadow of length $n=24m +16$ for $ m \geq 164$.
\end{cor}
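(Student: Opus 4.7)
The plan is to simply combine the two ingredients already available in the paper. Proposition \ref{de} gives the contrapositive formulation: if an extremal singly-even self-dual code of length $n=24m+16$ with minimal shadow exists, then an extremal doubly-even self-dual code of the same length $n=24m+16$ must exist (obtained as $C_0 \cup C_3$ in the notation of that proof). Therefore any length for which extremal doubly-even codes of length $24m+16$ are ruled out automatically rules out extremal singly-even codes with minimal shadow.

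To carry out the argument, I would first recall Zhang's bound from \cite{Zhang}, cited in the paragraph preceding Corollary \ref{bound}: for $l=2$, i.e., lengths of the form $24m+16$, extremal doubly-even self-dual codes do not exist when $m \geq 164$. Then I would invoke Proposition \ref{de} directly: the non-existence of extremal doubly-even self-dual codes of length $24m+16$ implies the non-existence of extremal singly-even self-dual codes of the same length with minimal shadow.

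There is essentially no obstacle here, since both pieces are already in place. The corollary is a one-line consequence of Proposition \ref{de} combined with Zhang's upper bound, exactly as Corollary \ref{bound} was derived from the internal analysis for the other residue classes mod $24$. The only thing worth remarking is that, in contrast to parts (i)--(iv) of Corollary \ref{bound}, here the bound for $n \equiv 16 \pmod{24}$ is not obtained from the sign of some $b_{m+j}$ but is inherited from the doubly-even case through the structural reduction of Proposition \ref{de}.
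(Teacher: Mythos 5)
Your proposal is correct and follows exactly the paper's own argument: the paper likewise derives this corollary immediately from Zhang's bound for doubly-even codes of length $24m+16$ ($m\ge 164$) combined with Proposition \ref{de}. Nothing further is needed.
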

\begin{proof} This follows immediately from the Zhang bound \cite{Zhang} for doubly-even codes in connection with Proposition \ref{de}.
\end{proof}

Summarizing the results in Theorem \ref{non-existence}, Corollary \ref{bound} and Corollary \ref{24m+16} we have proved either
the non-existence or an explicit bound for the length $n$ of an extremal singly-even self-dual code unless $n \equiv 20 \pmod {24}$.
To find an explicit bound for $n=24m +20$ seems to be difficult since the weight enumerator is not unique in this case.

\begin{remark} \rm Extremal singly-even self-dual codes of length $24m+8$ are constructed only for $m=1$, i.e. $n= 32$. There are exactly three inequivalent  singly-even self-dual $[32,16,8]$ codes. Yorgov proved that there are no extremal singly-even self-dual codes with minimal shadow of length $24m+8$ in the case  $m$ is even and $5m\choose m$ is odd \cite{Yorgov99}. %An interesting open problem is the existence of a singly-even $[56,28,12]$ self-dual code.
\end{remark}

\noindent
{\bf Examples.} \rm Extremal singly-even self-dual codes  of lengths $24m+12$, $24m+14$ and $24m+18$: \\[1ex]
$m=0$: \  There are unique extremal singly-even codes of lengths 12, 14 and 16, and they have minimal shadows.
There are two inequivalent self-dual $[18,9,4]$ codes, but only one of them is a code with minimal shadow (see \cite{CS}).

\noindent
$m=1$: \  Extremal self-dual codes  of lengths 36, 38 and 42 with minimal shadow are constructed. Only for the length 36 there is a complete classification \cite{Huffman05}. There are 16 inequivalent self-dual $[36,18,8]$ codes with minimal shadow and their weight enumerator is $W=1 + 225y^8 + 2016y^{10} + 9555y^{12} +\cdots$ (see \cite{n=36}).

\noindent
$m=2$: \  There exists a doubly circulant code with parameters $[60,30,12]$ and shadow of minimum weight $2$, denoted by $D13$ in \cite{CS}. The first examples for extremal self-dual codes with minimal shadow of lengths 62 and 66 are constructed in \cite{aut9} and \cite{Tsai}, respectively.

%\cite{Huffman05}: Extremal singly-even self-dual codes  of lengths $24m+12$, $24m+14$, and $24m+18$.
%According to Theorem \ref{unique} the weight enumerators $W$ and $S$ are unique for a fixed length:\\[2ex]
%n=12, 14: \ There are unique codes for both lengths and both have minimal shadows.\\[1ex]
%n=18:   \ There are two inequivalent  codes and one of them has minimal shadow. \\[1ex]
%n=36: \  There are 16 inequivalent codes with minimal shadow and weight enumerator  $W=1 + 225y^8 + 2016y^{10} + 9555y^{12} +\cdots$.\\[1ex]
%n=38: \  There are [38,19,8] self-dual codes with minimal shadow and weight enumerator  $W = 1 + 203y^8 + 1702y^{10} + \cdots$.\\[1ex]
%n=42: \  There are
%codes with minimal shadow;  $W =1 + 164y^8+ 697 y^{10} +  \cdots$. \\[1ex]
%n=60: \ There are  codes with minimal shadow;  $W = 1 + 3451y^{12} +  24128y^{14}+\cdots$. \\[1ex]
%n=62: \ There are at least 67 inequivalent codes;  $W = 1 + 2308y^{12}+ 23767 y^{14} + \cdots$  and  $S = y^3 + 1039 y^{11}+\cdots$.  \\[1ex]
%n=66: \ At least one code exists; $W = 1 + 1690y^{12} + 7990 y^{14}+\cdots$ and
%$S = y +  \cdots$.

\bigskip

Finally, we would like to mention that
similar to the case of extremal doubly-even self-dual codes
there is a large gap between the bounds for extremal singly-even self-dual codes and what we really can construct.

%%%%%%%%%%%%%%%%%%%%%%%%%%%%%%%%%%%%%%%%%%%%%%%%%%%%%%%%%%%%%%%%%%%%%%%%%%%


\begin{thebibliography}{99}

\bibitem{Bachoc_Gaborit}
C. Bachoc and P. Gaborit, Designs and self-dual codes with long
shadows, {\it J. Combin. Theory Ser. A}, \textbf{105} (2004),
15--34.

\bibitem{performance}
S. Bouyuklieva, A. Malevich and W. Willems, On the
performance of binary extremal self-dual codes,  {\it Advances in Mathematics of Communications} \textbf{5} (2011), 267--274.


\bibitem{StefkaZlatko}
S. Bouyuklieva and Z. Varbanov, Some connections between self-dual
codes, combinatorial designs and secret sharing schemes, {\it
Advances in Mathematics of Communications} \textbf{5} (2011),
191--198.

\bibitem{BrualdiPless} R.~Brualdi and V.~Pless, Weight Enumerators of
Self-Dual Codes, \emph{IEEE Trans. Inform. Theory} \textbf{37}
(1991), 1222--1225.

\bibitem{CS}
J.H.Conway and N.J.A.Sloane, A new upper bound on the minimal
distance of self-dual codes, {\it IEEE Trans. Inform. Theory},
{\bf 36} (1990), 1319--1333.

\bibitem{Elkies}
N. Elkies, Lattices and codes with longshadows, \emph{Math. Res.
Lett.} 2 (5) (1995), 643-–651.

\bibitem{n=36}
C.A. Melchor and P. Gaborit, On the classification of extremal $[36,18,8]$ binary
self-dual codes, {\it IEEE Trans. Inform. Theory},
{\bf 54} (2008), 4743--4750.

\bibitem{nonexistence}
S. Han and J.B. Lee, Nonexistence of some extremal self-dual codes, J. Korean Math. Soc. \textbf{43} (2006), No. 6, 1357-1369.

\bibitem{Huffman05} W.C. Huffman,
On the classification and enumeration of self-dual codes, {\sl
Finite Fields Appl.} {\bf 11} (2005), 451--490.


%\bibitem{HP} W. C. Huffman, V. Pless, Fundamentals of error-correcting
%codes, Cambridge Univ. Press, 2003.


%\bibitem{MacWil}
%F.J.~MacWilliams and N.J.A.~Sloane, {\it The theory of
%error-correcting codes}, North Holland, Amsterdam 1977.


\bibitem{Rains} E.M.~Rains,
Shadow bounds for self-dual codes, {\sl IEEE\ Trans.\ Inform.\
Theory} {\bf 44} (1998), 134--139.

\bibitem{Rains-asymptotic} E.M.~Rains,
New asymptotic bounds for self-dual codes and lattices, {\sl IEEE\ Trans.\ Inform.\
Theory} {\bf 49} (2003), 1261--1274.

%\bibitem{Handbook-SelfDual} E.M. Rains and N. J. A. Sloane,
%Self-dual codes, in \emph{Handbook of Coding Theory}, V. S. Pless
%and W. C. Huffman, Eds., Elsevier, Amsterdam, 1998, 177--294.

\bibitem{aut9}
R. Russeva and N. Yankov, On binary self-dual codes of lengths 60, 62, 64 and 66 having an automorphism of order 9,
{\it Designs, Codes and Cryptography} {\bf 45} (2007), 335–-346.

\bibitem{Tsai}
H.P. Tsai, Extremal self-dual codes of length 66 and 68, {\sl IEEE\ Trans.\ Inform.\
Theory} {\bf 45} (1999), 2129-–2133.

\bibitem{Yorgov99}
V. Yorgov, On the minimal weight of some singly-even codes,
{\it IEEE Transactions on Information Theory} {\bf 45} (1999), 2539-2541.

\bibitem{Zhang}
S.~Zhang, On the nonexistence of extremal self-dual codes,
{\it Discrete Appl. Math.} {\bf 91} (1999), 277-286.

\end{thebibliography}
\end{document}